\tikzset{    vertex/.style={circle,draw,minimum size=1.5em},    edge/.style={->,> = latex'}}
\newcommand{\D}{\displaystyle}
\newtheorem{theorem}{Theorem}[section]
\newtheorem{lemma}[theorem]{Lemma}
\newtheorem*{Definition*}{Definition}
\numberwithin{equation}{section}
\def\qed{\hfill \ifhmode\unskip\nobreak\fi\quad\ifmmode\Box\else$\Box$\fi\\ }
\begin{document}

\title[10-dimensional almost complex $S^1$-manifold with fixed points]{Lower bound on the number of fixed points for circle actions on 10-dimensional almost complex manifolds}
\author{Donghoon Jang}
\thanks{2020 Mathematics Subject Classifications: 58C30, 32Q60, 37C25}
\thanks{keywords: almost complex manifold, circle action, fixed point, Chern number}
\thanks{This work was supported by a 2-Year Research Grant of Pusan National University.}
\address{Department of Mathematics and Institute of Mathematical Science, Pusan National University, Pusan, Korea}
\email{donghoonjang@pusan.ac.kr}

\begin{abstract}
For a circle action on a compact almost complex manifold with a fixed point, the lower bound on the number of fixed points is known in dimension up to 12 except 10.
In this paper, we show that if the circle group acts on a 10-dimensional compact almost complex manifold with a fixed point, then there are at least 6 fixed points. This minimum is attained by $\mathbb{CP}^5$ and $S^6 \times \mathbb{CP}^2$. We establish this lower bound by showing that there does not exist a circle action on a 10-dimensional compact almost complex manifold with 4 fixed points.
\end{abstract}

\maketitle

\section{Introduction}

An \textbf{almost complex manifold} is a manifold $M$ together with a smooth bundle map $J$ (called the \textbf{almost complex structure}) on the tangent bundle $TM$ of $M$ such that the restriction $J_m$ of $J$ on each tangent space $T_mM$ has a linear complex structure, that is, $J_m^2=-\textrm{Id}_{T_mM}$. By definition, every almost complex manifold is even dimensional. We say that an action of a group $G$ on an almost complex manifold $(M,J)$ \textbf{preserves} the almost complex structure $J$ if $dg \circ J=J \circ dg$ for all $g \in G$. Throughout this paper, we assume that any group action on an almost complex manifold preserves the almost complex structure.

Let the circle group act on a $2n$-dimensional compact, connected almost complex manifold $M$. Let $k$ be the number of fixed points. When $k$ is small, possible dimensions for $M$ and known examples of such a manifold $M$ are as in Table \ref{table1}; also see Theorem \ref{few}. Circle actions with few fixed points on different types of manifolds have been studied; for those results, we refer to \cite{A, DW, GS, J1, J3, J4, J6, J8, KL, K1, K2, LM, M, PT, T, W}.

\begin{table}[]
\begin{tabular}{|l|l|l|}
\hline
Number of fixed points & Possible $\dim M$ & Example \\ \hline
1  & 0 & point \\ \hline
2  & 2 and 6   & $S^2$ and $S^6$ \\ \hline
3  & 4 & $\mathbb{CP}^2$ \\ \hline
\end{tabular}
\caption{Possible dimensions for small number of fixed points} \label{table1}
\end{table}

It is a natural question to ask what the minimal number of fixed points is for such a manifold $M$ in a fixed dimension. When the dimension of the manifold is small, the minimum for the number of fixed points and a known example that attains this minimum are as in Table \ref{table2}. Note that in \cite{GKZ}, Goertsches, Konstantis, and Zoller constructed an 8-dimensional manifold with 4 fixed points, which is not $S^2 \times S^6$; it is a non-trivial $S^2$-bundle over $S^6$. Godinho, Pelayo, and Sabatini studied this question for the class of manifolds whose Chern number $\int_M c_1 c_{n-1}$ vanish \cite{GPS}.

This question is related to a conjecture of Kosniowski which asserts that if the circle group acts on a $2n$-dimensional compact unitary manifold $M$  with $k$ fixed points and $M$ does not bound equivariantly, then $\lfloor \frac{n}{2} \rfloor +1 \leq k$ \cite{K2}. Kosniowski \cite{K2} and Musin \cite{M} showed that if $k=2$ then $n=1$ or $3$. Wiemeler showed that if $k=3$ then $n=2$ \cite{W}. Thus, the Kosniowki conjecture holds in dimension up to 14.

\begin{table}[]
\begin{tabular}{|l|l|l|}
\hline
Dimension of $M$ & Minimum for $|M^{S^1}|$ & Example \\ \hline
0  & 1 & point \\ \hline
2  & 2 & $S^2$ \\ \hline
4  & 3 & $\mathbb{CP}^2$ \\ \hline
6  & 2 & $S^6$  \\ \hline
8  & 4 & $S^2 \times S^6$ \\ \hline
12 & 4 & $S^6 \times S^6$ \\ \hline
\end{tabular}
\caption{Minimal number of fixed points in low dimensions} \label{table2}
\end{table}

Let $M$ be a compact, connected almost complex manifold endowed with a circle action. If the number $k$ of fixed points is odd, then the dimension of the manifold is a multiple of $4$; see Lemma \ref{even}. If $k=2$, by Theorem~\ref{few} (2), $M$ is the $2$-sphere or $\dim M=6$. Now, suppose that the dimension of $M$ is 10. By the discussions, $k$ is even and $k \neq 2$. In this paper, we establish an exact lower bound for the number of fixed points for a 10-dimensional manifold.

\begin{theorem} \label{bound}
Let the circle group act on a 10-dimensional compact almost complex manifold. If the action has a fixed point, then there are at least 6 fixed points.
\end{theorem}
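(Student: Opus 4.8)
The plan is to first reduce to a single numerical configuration and then attack it. Since an odd number of fixed points forces $\dim M$ to be divisible by $4$ (Lemma \ref{even}) while $\dim M = 10$, the number $k$ of fixed points is even; and by the Kosniowski--Musin result quoted above, $k=2$ would force $\dim M \in \{2,6\}$, so $k \neq 2$. Hence $k \geq 4$, and it suffices to rule out $k=4$. So I assume $M^{S^1}$ consists of exactly four isolated points, and at each fixed point $p$ I record the tangent weights $w_{p,1},\dots,w_{p,5} \in \mathbb{Z}\setminus\{0\}$; I write $\lambda_p$ for the number of negative weights at $p$, and $N_i = \#\{p : \lambda_p = i\}$.

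First I would extract the coarse distribution of the $N_i$. Localizing the equivariant Hirzebruch $\chi_y$-genus at the fixed points and using its rigidity, so that the localized sum $\sum_p \prod_j (1+y\,t^{w_{p,j}})(1-t^{w_{p,j}})^{-1}$ is independent of $t$, I evaluate it in the limits $t \to 0$ and $t \to \infty$ to obtain the two expressions $\chi_y(M) = \sum_{i=0}^{5} N_i(-y)^i = \sum_{i=0}^{5} N_{5-i}(-y)^i$, whence $N_i = N_{5-i}$ for all $i$. With $k=4$ this yields $N_0 + N_1 + N_2 = 2$, leaving the six cases $(N_0,N_1,N_2) \in \{(2,0,0),(1,1,0),(1,0,1),(0,2,0),(0,1,1),(0,0,2)\}$, where $N_0 = \mathrm{Td}(M)$.

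Next I would assemble the arithmetic constraints that a putative weight system must satisfy, all from Atiyah--Bott--Berline--Vergne localization. For degree reasons any equivariant class of degree below $2\cdot 5$ integrates to zero, which applied to powers of the equivariant first Chern class gives
\[
\sum_{p}\frac{\bigl(\textstyle\sum_{j} w_{p,j}\bigr)^m}{\prod_{j} w_{p,j}} = 0 \qquad (0 \le m \le 4),
\]
while the top power yields the integer $\int_M c_1^5 = \sum_{p}(\sum_j w_{p,j})^5 / \prod_j w_{p,j}$, and likewise every Chern number of $M$ is an integer computed from the fixed-point data. The most rigid constraint is the functional equation coming from the (rigid) equivariant Todd genus, $\sum_{p}\prod_{j=1}^{5}(1 - t^{w_{p,j}})^{-1} = N_0$ as rational functions of $t$; expanding each factor as a power series according to the sign of $w_{p,j}$ and matching coefficients of $t^m$ severely restricts the admissible weights, for instance controlling how many weights can equal a given small integer. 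To turn magnitudes into structure I would use isotropy/sphere propagation: a weight $w$ at $p$ produces an $S^1$-invariant $2$-sphere, fixed by $\mathbb{Z}_{|w|}$, joining $p$ to a second fixed point $q$ carrying the weight $-w$ along the sphere, with the remaining normal weights at $p$ and $q$ congruent modulo $w$; and for each prime $p$ the fixed submanifold $M^{\mathbb{Z}_p}$ is again a compact almost complex $S^1$-manifold with $\chi(M^{\mathbb{Z}_p}) = \chi(M^{S^1}) = 4$. Together these let me propagate weight data across the four points and reduce each case to finitely many explicit configurations, each eliminated by the functional equation together with the integrality of $\int_M c_1^5$ and the other Chern numbers.

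The hard part will be the case analysis itself rather than any single identity. In particular the Todd-genus-one cases $(1,1,0)$ and $(1,0,1)$ are the delicate ones: they are precisely the ``truncations'' of the genuine six-fixed-point examples such as $\mathbb{CP}^5$, so there the weight systems are least constrained and admit many sign patterns that survive the coarse tests. For these I expect to need the full strength of the modulo-$p$ sphere data combined with the simultaneous integrality of several Chern numbers to exhaust the possibilities. A secondary technical point to handle carefully is that all of the above tools, namely the rigidity of $\chi_y$, the symmetry $N_i = N_{5-i}$, and the localization identities, must be invoked in their index-theoretic form valid for merely almost complex manifolds, since $M$ is not assumed to carry a compatible symplectic or Kähler structure.
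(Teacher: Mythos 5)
Your reduction to ruling out $k=4$ is correct and matches the paper: Lemma \ref{even} forces $k$ even, and the classification of actions with two fixed points (Theorem \ref{few}) excludes $k=2$. But from that point on your proposal is a research program, not a proof, and the program as described has a genuine gap: nothing in your toolkit reduces the problem to ``finitely many explicit configurations.'' The weights at the four fixed points range over arbitrary nonzero integers; the vanishing identities $\sum_p (\sum_j w_{p,j})^m/\prod_j w_{p,j}=0$ for $m\le 4$, the Todd functional equation, the symmetry $N_i=N_{5-i}$, and integrality of Chern numbers are all satisfied by genuine examples with $4$ fixed points in dimensions $8$ and $12$ (e.g.\ $S^2\times S^6$ and $S^6\times S^6$), so these general tools cannot by themselves produce a contradiction --- you need a mechanism specific to dimension $10$, and you never identify one. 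A secondary inaccuracy: your ``sphere propagation'' step, asserting that each weight $w$ at $p$ yields an invariant $2$-sphere joining $p$ to a fixed point $q$ with weight $-w$, is false in general; the relevant component of $M^{\mathbb{Z}_{|w|}}$ need not be $2$-dimensional, and only the weaker global multiset and mod-$w$ congruence statements hold. Also, you retain six cases for $(N_0,N_1,N_2)$ where the paper cuts to three using the nontrivial fact (Lemma \ref{l27}) that two \emph{consecutive} $N_i$ are nonzero, which eliminates $(2,0,0)$, $(1,0,1)$, and $(0,2,0)$ immediately.

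The paper's actual route avoids all weight-level analysis and supplies exactly the dimension-specific mechanism your sketch lacks. First, the pairing lemma from \cite{J5} (Lemma \ref{l32}) shows the four fixed points split into two pairs on which $c_1$ restricts to $\pm a t$; pulling the factor $c_1^2|_p = a^2t^2$ out of the ABBV sum and using that any class of cohomological degree below $\dim M$ integrates to zero yields $\int_M c_1^3c_2 = \int_M c_1^2c_3 = 0$ (Lemma \ref{vanish}) --- a vanishing your degree-count identities for pure powers of $c_1$ do not give. Second, the Godinho--Sabatini formula (Theorem \ref{gsformula}) computes $\int_M c_1c_4$ purely from the $N_i$, so it is determined in each of the three cases. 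Finally, the dimension-10 identity $\mathrm{Todd}(M)=\frac{1}{1440}\int_M(-c_1c_4+c_1^2c_3+3c_1c_2^2-c_1^3c_2)$ together with $\mathrm{Todd}(M)=N_0$ then solves for $\int_M c_1c_2^2$, which comes out to $\frac{1532}{3}$, $\frac{20}{3}$, or $-\frac{4}{3}$ in the three cases, contradicting integrality. If you want to salvage your approach, the essential missing idea is this combination: a structural constraint on $c_1|_p$ across the fixed points (from \cite{J5}) strong enough to kill the auxiliary Chern numbers, plus a formula expressing $\int_M c_1c_4$ in terms of the $\chi_y$-data alone, fed into the Todd polynomial particular to $n=5$.
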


We prove Theorem~\ref{bound} in the last section.
The examples of 10-dimensional $M$ with 6 fixed points are the standard linear action on $\mathbb{CP}^5$ and a diagonal action on $\mathbb{CP}^2 \times S^6$; note that the Todd genus of $\mathbb{CP}^5$ is 1, while that of $\mathbb{CP}^2 \times S^6$ is 0. 
Circle actions on complex manifolds and symplectic circle actions on symplectic manifolds are particular examples of circle actions on almost complex manifolds. Thus, the conclusion of Theorem \ref{bound} holds for complex manifolds and symplectic manifolds.

Circle actions on compact almost complex manifolds with 4 fixed points are known to exist in dimension up to 12 except 10.
We list examples of actions with 4 fixed points in Table~\ref{table3} in those dimensions.
In dimension 0 and 2, such an example is disconnected. Dimension 6 has more examples than other dimensions, the complex projective space $\mathbb{CP}^3$, a complex quadric, Fano 3-folds $V_5$ and $V_{22}$, $S^6 \sqcup S^6$ (disconnected), blow up of a point in $S^6$ \cite{J4}, $S^2 \times S^4$ \cite{KL}. Dimension 8, as mentioned, has two known examples, $S^2 \times S^6$ and the non-trivial $S^2$-bundle over $S^6$ \cite{GKZ}.

\begin{table}[]
\begin{tabular}{|l|l|l|}
\hline
Dimension  &  Example \\ \hline
0  & 4 points \\ \hline
2  & $S^2 \sqcup S^2$ \\ \hline
4  & Hirzebruch surfaces \\ \hline
6  & $\mathbb{CP}^3$, complex quadric, Fano 3-fold, \cite{J4}, \cite{KL}  \\ \hline
8  & $S^2 \times S^6$, \cite{GKZ} \\ \hline
12 & $S^6 \times S^6$ \\ \hline
\end{tabular}
\caption{Examples of actions with 4 fixed points in low dimensions} \label{table3}
\end{table}

Therefore, 10 is the first dimension for which we do not know whether there exists an action with 4 fixed points. We show that such an action does not exist.

\begin{theorem} \label{main}
There does not exist a circle action on a 10-dimensional compact almost complex manifold with exactly 4 fixed points.
\end{theorem}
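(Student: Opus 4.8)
The plan is to prove non-existence by deriving a contradiction from the equivariant data of a hypothetical action with exactly four fixed points.Suppose for contradiction that the circle acts on a $10$-dimensional compact almost complex manifold $M$ with exactly four fixed points. Since a positive-dimensional fixed component would contain infinitely many points, the fixed set consists of four isolated points, and at each fixed point $p$ the isotropy representation on $T_pM\cong\mathbb{C}^5$ is given by five nonzero integer weights $w_{p,1},\dots,w_{p,5}$. The plan is to show that no assignment of weights to four points is simultaneously consistent with the rigidity of the Hirzebruch $\chi_y$-genus, the Atiyah--Bott--Berline--Vergne localization relations, and the structure of the fixed submanifolds of finite subgroups.

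First I would record the global constraints. Writing $N_i$ for the number of fixed points with exactly $i$ negative weights, rigidity of the $\chi_y$-genus gives $\chi_y(M)=\sum_{i=0}^5 N_i(-y)^i$; comparing this with the expression obtained from the reversed action $g\mapsto g^{-1}$, which sends index $i$ to index $5-i$ while leaving the topological invariant $\chi_y(M)$ unchanged, forces the symmetry $N_i=N_{5-i}$. Hence $N_0+N_1+N_2=2$, leaving only a short list of index distributions, with Todd genus equal to $N_0\in\{0,1,2\}$. Next I would extract the rational relations from localization: for every polynomial $f$ of degree $<5$,
\[
\sum_{p} \frac{f(w_{p,1},\dots,w_{p,5})}{\prod_{i=1}^5 w_{p,i}}=0 ,
\]
and in particular $\sum_p 1/\prod_i w_{p,i}=0$ and $\sum_p (\Sigma_p)^j/\prod_i w_{p,i}=0$ for $1\le j\le4$, where $\Sigma_p=\sum_i w_{p,i}$, while degree-$5$ choices of $f$ compute the Chern numbers of $M$. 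Together with the fact that the Todd genus of $M$ equals $N_0$, these identities severely restrict the weight multisets in each case.

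The structural engine for eliminating the surviving cases is the analysis of the fixed submanifolds $M^{\mathbb{Z}_a}$ for integers $a\ge2$. Each such set is again a compact almost complex manifold carrying an induced circle action, and the component through a fixed point $p$ has real dimension $2\cdot\#\{i:a\mid w_{p,i}\}$ with isotropy weights the corresponding sub-multiset of $\{w_{p,i}\}$. Comparing these induced actions against the known constraints in dimensions $0,2,4,6,8$ (Lemma~\ref{even}, Theorem~\ref{few}, and Tables~\ref{table2} and~\ref{table3}; for instance, a $4$-dimensional almost complex $S^1$-manifold cannot have exactly two fixed points) turns each admissible weight pattern that produces a nontrivial $M^{\mathbb{Z}_a}$ into a statement about a lower-dimensional manifold whose fixed-point count is already forbidden. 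I would run this reduction case by case across the index distributions, using the localization relations to force divisibilities among the weights and hence to manufacture such lower-dimensional submanifolds.

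The main obstacle I anticipate is the \emph{generic} regime, in which the weights are sufficiently coprime that every proper subgroup $\mathbb{Z}_a$ fixes only isolated points, so the recursive reduction carries no information and one is thrown back entirely onto the rational identities and integrality. Closing these cases, and especially the balanced distribution with Todd genus $0$, requires showing that the system $\sum_p (\Sigma_p)^j/\prod_i w_{p,i}=0$ for $0\le j\le4$ has no integer solution compatible with the prescribed sign pattern; this is a delicate and lengthy elementary argument rather than a single clean step. I would organize it by normalizing the weights (fixing signs and scaling out common factors), bounding the smallest $|w_{p,i}|$ through the relation $\sum_p 1/\prod_i w_{p,i}=0$, and then eliminating the finitely many residual configurations directly, obtaining a contradiction in every case.
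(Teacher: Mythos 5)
Your proposal correctly assembles the standard constraints (discreteness of the fixed set, the symmetry $N_i=N_{5-i}$, ABBV localization identities, and reduction to submanifolds $M^{\mathbb{Z}_a}$), but it does not contain a proof: the decisive step is explicitly deferred. You acknowledge that in the ``generic'' regime, where the isotropy subgroups $\mathbb{Z}_a$ fix only isolated points, everything rests on showing that the system $\sum_p (\Sigma_p)^j/\prod_i w_{p,i}=0$, $0\le j\le 4$, admits no integer solution compatible with the sign pattern, and you describe this only as a ``delicate and lengthy elementary argument'' to be organized by normalization and bounding. That is precisely where the difficulty lives, and there is no reason to expect the elimination to terminate in finitely many configurations: the analogous numerical localization constraints \emph{do} admit solutions in dimension $12$ (realized by $S^6\times S^6$, which has exactly $4$ fixed points), so any argument must exploit something genuinely specific to dimension $10$, which your plan never identifies. (A smaller inaccuracy: the localization identities hold for \emph{symmetric} polynomials $f$ in the weights, since only those arise as restrictions of equivariant characteristic classes; and without invoking the consecutive-nonvanishing property of Lemma~\ref{l27} your list of index distributions is longer than the three cases of Lemma~\ref{cases}, e.g.\ $N_0=N_5=2$ is not excluded by symmetry alone.)

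The paper closes exactly this gap with a structural input you are missing: the pairing result of \cite[Lemma 3.2]{J5} (Lemma~\ref{l32}), which forces $\sum_i w_{q_1,i}=\sum_i w_{q_2,i}=-\sum_i w_{q_3,i}=-\sum_i w_{q_4,i}$ for a partition of the four fixed points into two pairs. This makes $c_1^2|_p$ equal to the same constant $a^2t^2$ at all four fixed points, whence every Chern number containing a factor $c_1^2$ vanishes (Lemmas~\ref{pre-vanish} and~\ref{vanish}). In dimension $10$ the Todd genus formula $\mathrm{Todd}(M)=\frac{1}{1440}\int_M(-c_1c_4+c_1^2c_3+3c_1c_2^2-c_1^3c_2)$ then collapses to $\frac{1}{1440}(-\int_M c_1c_4+3\int_M c_1c_2^2)$; since $\mathrm{Todd}(M)=N_0$ and $\int_M c_1c_4$ is computed from the $N_i$ by the Godinho--Sabatini formula (Theorem~\ref{gsformula}), the Chern number $\int_M c_1c_2^2$ is completely determined in each of the three cases of Lemma~\ref{cases}, and it comes out non-integral ($\tfrac{1532}{3}$, $\tfrac{20}{3}$, $-\tfrac{4}{3}$), contradicting integrality of Chern numbers. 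No case-by-case analysis of weight multisets or of $M^{\mathbb{Z}_a}$ is needed at all. Without Lemma~\ref{l32} (or an equivalent replacement) and the integrality contradiction for $\int_M c_1c_2^2$, your outline cannot be completed as written.
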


The main ingredients for the proof of Theorem \ref{main} are the Chern numbers, the Todd genus, and the ABBV localization theorem (Theorem \ref{t21}). We outline the proof of Theorem \ref{main}. For this, let the circle group act on a $10$-dimensional compact almost complex manifold $M$ with 4 fixed points. 
The Todd genus $\mathrm{Todd}(M)$ of $M$ satisfies $\mathrm{Todd}(M)=\int_M \frac{1}{1440} (-c_1c_4+c_1^2c_3+3c_1c_2^2-c_1^3c_2)$.
By using the previous work of \cite{J5} and the ABBV localization theorem, we prove Lemma \ref{vanish} that the Chern numbers $\int_M c_1^3 c_2$ and $\int_M c_1^2 c_3$ vanish.
Then we use \cite[Theorem 1.2]{GS}, which computes the Chern number $\int_M c_1 c_4$ in terms of the number $N_i$ of fixed points with $i$ negative weights, and get a rational number for $\int_M c_1 c_2^2$. This contradicts the fact that each Chern number of a compact almost complex manifold is an integer. It follows that such a manifold $M$ does not exist.

\section*{Acknowledgements}

The author would like to thank Leonor Godinho and Silvia Sabatini for fruitful discussions and their warm hospitalities; this work was partially done during the author's visits to Instituto Superior T\'ecnico and University of Cologne. The author also thanks the anonymous referees for their valuable comments and suggestions, which helped improve the quality of the paper and led to a simpler proof of Theorem~\ref{main}.

\section{Background and preliminaries} \label{s2}

Let the circle act on a compact oriented manifold $M$. The \textbf{equivariant cohomology} of $M$ is
\begin{center}
$H_{S^1}^*(M) := H^*(M \times_{S^1} S^{\infty})$. 
\end{center}
The projection map $\pi:M \to \{\textrm{pt}\}$ induces a push-forward map
\begin{center}
$\displaystyle \int_M := \pi_* : H_{S^1}^i (M;\mathbb{Z}) \longrightarrow H^{i - \dim M} (\mathbb{CP}^\infty ; \mathbb{Z})$.
\end{center}
The Atiyah-Bott-Berline-Vergne localization theorem states that this push-forward map can be computed by the fixed point data.

\begin{theorem} [ABBV localization theorem]\cite{AB, BV} \label{t21} Let the circle act on a compact oriented manifold $M$. For any $\alpha \in H_{S^1}^*(M;\mathbb{Q})$,
\begin{center}
$\displaystyle \int_M \alpha = \sum_{F \subset M^{S^1}} \int_F \frac{\alpha|_F}{e_{S^1}(N_F)}$.
\end{center}
Here, the sum is taken over all fixed components, and $e_{S^1}(N_F)$ is the equivariant Euler class of the normal bundle of $F$.
\end{theorem}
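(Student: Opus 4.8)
The plan is to derive the formula from the Borel localization theorem together with the self-intersection formula, working over a localized coefficient ring. Write $R = H^*(\mathbb{CP}^\infty;\mathbb{Q}) = \mathbb{Q}[u]$ with $\deg u = 2$, so that $H_{S^1}^*(M;\mathbb{Q})$ is a graded $R$-module via the projection $M \times_{S^1} S^\infty \to \mathbb{CP}^\infty$, and let $\mathcal{R} = R[u^{-1}]$ be obtained by inverting $u$. The first and central step is to show that the restriction homomorphism $H_{S^1}^*(M;\mathbb{Q}) \to H_{S^1}^*(M^{S^1};\mathbb{Q})$ becomes an isomorphism after tensoring with $\mathcal{R}$ over $R$.

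For this I would use the long exact sequence of the equivariant pair $(M, M^{S^1})$ and prove that the relative group $H_{S^1}^*(M, M^{S^1};\mathbb{Q})$ is a torsion $R$-module, i.e.\ annihilated by some power of $u$; inverting $u$ then kills it and forces the restriction to be an isomorphism. The torsion claim rests on the observation that $M \setminus M^{S^1}$ is built from finitely many invariant tubular neighborhoods of orbit-type strata, each having only finite stabilizers, and the equivariant cohomology of a space on which $S^1$ acts with finite stabilizers is $u$-torsion. A Mayer--Vietoris induction over the orbit types, which are finite in number because $M$ is compact, assembles these local statements into the global one. This is the step I expect to be the main obstacle, since it is where compactness and finiteness of orbit types are genuinely used and where the module-theoretic bookkeeping must be carried out with care.

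The second step is local and formal. Since $S^1$ acts trivially on each fixed component $F$, the normal bundle $N_F$ splits equivariantly into weight subbundles all of whose weights are nonzero, so $e_{S^1}(N_F)$ has leading term a nonzero integer multiple of a power of $u$ and is therefore invertible in $H_{S^1}^*(F;\mathbb{Q}) \otimes_R \mathcal{R}$. For the inclusion $i_F : F \hookrightarrow M$ the self-intersection formula $i_F^*\,(i_F)_*\,\beta = e_{S^1}(N_F)\,\beta$ holds, and combined with the invertibility of $e_{S^1}(N_F)$ and the isomorphism from Step 1 (which lets one match restrictions component by component), this yields the identity $\alpha = \sum_F (i_F)_*\big( \alpha|_F / e_{S^1}(N_F) \big)$ in $H_{S^1}^*(M;\mathbb{Q}) \otimes_R \mathcal{R}$.

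Finally I would apply the push-forward $\int_M$ to this identity. Functoriality of the equivariant push-forward under the composition $F \hookrightarrow M \to \{\mathrm{pt}\}$ gives $\int_M \circ (i_F)_* = \int_F$, whence
\[
\int_M \alpha = \sum_{F \subset M^{S^1}} \int_M (i_F)_* \frac{\alpha|_F}{e_{S^1}(N_F)} = \sum_{F \subset M^{S^1}} \int_F \frac{\alpha|_F}{e_{S^1}(N_F)},
\]
which is the assertion. The equality is first obtained in $\mathcal{R}$, but since $\int_M \alpha$ already lies in $R$ this causes no difficulty; one records the answer in the fraction field of $R$ as in the statement. The only inputs beyond standard equivariant cohomology are the naturality and the projection and self-intersection properties of the push-forward, which are available because $M$ is a compact oriented manifold.
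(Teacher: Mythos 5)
The paper does not prove this statement; it quotes it as a classical result with a citation to Atiyah--Bott, and your outline is precisely the standard argument from that source: Borel localization over $\mathbb{Q}[u][u^{-1}]$ via $u$-torsion of $H_{S^1}^*(M,M^{S^1};\mathbb{Q})$ (finite stabilizers off the fixed set, Mayer--Vietoris over the finitely many orbit types), invertibility of $e_{S^1}(N_F)$ from the nonzero-weight splitting, the self-intersection formula to write $\alpha=\sum_F (i_F)_*\bigl(\alpha|_F / e_{S^1}(N_F)\bigr)$, and functoriality of the push-forward. Your proposal is correct and takes essentially the same approach as the cited proof, with the genuinely technical step (the torsion claim) correctly identified and its standard resolution sketched.
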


For a compact almost complex manifold, the Hirzebruch $\chi_y$-genus is the genus associated to the power series $\frac{x(1+ye^{-x(1+y)})}{1-e^{-x(1+y)}}$. For a $2n$-dimensional compact almost complex manifold $M$, let $\chi_y(M) = \sum_{i=0}^n \chi^i(M) \cdot y^i$ denote the Hirzebruch $\chi_y$-genus of $M$. Here, $\chi^i(M)=\int_M T_i^n$, where $T_i^n$ is a rational combination of products of Chern classes $c_{j_1} \cdots c_{j_k}$ such that $j_1+\cdots+j_k=n$. Note that $\chi_{-1}(M)$ is the Euler characteristic of $M$, $\chi_0(M)=\textrm{Todd}(M)$ is the Todd genus of $M$, and $\chi_1(M)=\textrm{sign}(M)$ is the signature of $M$.

For a circle action on a compact almost complex manifold with a discrete fixed point set, the Atiyah-Singer index formula for the Hirzebruch $\chi_y$-genus is as follows.

\begin{theorem} \label{t22} \cite{HBJ, L} Let the circle act on a $2n$-dimensional compact almost complex manifold $M$ with a discrete fixed point set. Then for each $i$ such that $0 \leq i \leq n$,
\begin{center}
$\displaystyle \chi^i(M) = \sum_{p \in M^{S^1}} \frac{\sigma_i(g^{w_{p,1}},\cdots,g^{w_{p,n}})}{\prod_{j=1}^n (1-g^{w_{p,j}})}=(-1)^i N_i=(-1)^i N_{n-i}$.
\end{center}
Here, $\chi_y(M)=\sum_{i=0}^n \chi^i(M) \cdot y^i$ is the Hirzebruch $\chi_y$-genus of $M$, $g$ is an indeterminate, $\sigma_i$ is the $i$-th elementary symmetric polynomial in $n$ variables, and $N_i$ is the number of fixed points that have exactly $i$ negative weights.
\end{theorem}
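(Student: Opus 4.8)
My plan is to establish the three equalities in turn: obtain the fixed-point sum from an equivariant index computation, show by rigidity that this sum does not depend on $g$, and then recover the two numerical values by letting $g\to 0$ and $g\to\infty$. First I would view the $\chi_y$-genus as the index of the Dolbeault-type operator determined by the almost complex structure, namely the $\mathrm{Spin}^{c}$-Dirac operator twisted by $\Lambda^{\bullet}(T^{1,0}M)^{*}$, with the variable $y$ recording the exterior degree; in exterior degree $i$ its non-equivariant index is precisely $\chi^i(M)=\int_M T_i^n$ by the Atiyah--Singer index theorem. Because the circle action preserves this structure, the operator carries an $S^1$-equivariant index, and the equivariant (holomorphic Lefschetz) form of the localization theorem, in the spirit of Theorem~\ref{t21}, expresses it as a sum of contributions over the isolated fixed points. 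At a fixed point $p$ with weights $w_{p,1},\dots,w_{p,n}$ this contribution is $\prod_{j=1}^{n}\frac{1+y\,g^{w_{p,j}}}{1-g^{w_{p,j}}}$. Expanding $\prod_{j=1}^{n}(1+y\,g^{w_{p,j}})=\sum_{i=0}^{n}\sigma_i(g^{w_{p,1}},\dots,g^{w_{p,n}})\,y^i$ and taking the coefficient of $y^i$ writes the degree-$i$ equivariant index as $\sum_{p}\frac{\sigma_i(g^{w_{p,1}},\dots,g^{w_{p,n}})}{\prod_{j=1}^{n}(1-g^{w_{p,j}})}$, which is the first expression in the statement.

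The crux, and the step I expect to be the main obstacle, is that this sum --- a priori only a rational function of $g$ with possible poles where some $1-g^{w_{p,j}}$ vanishes --- is actually constant in $g$ and equal to $\chi^i(M)$. This is the rigidity of the $\chi_y$-genus under circle actions: the apparent poles at roots of unity must cancel upon summing over $p$. For a genuinely complex $M$ this is transparent, since $S^1$ lies in the identity component of the biholomorphism group and therefore acts trivially on each Dolbeault cohomology group $H^q(M,\Omega^i)$, so the equivariant index coincides with the ordinary index $\chi^i(M)$ for every $g$; in the merely almost complex case there is no such cohomology, and I would instead invoke the rigidity theorem for the $\chi_y$-genus (equivalently, the cancellation of these poles), which is the essential input I would cite rather than reprove. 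One could in principle try to prove constancy directly by checking that the rational function has no poles and is regular at $g=0$ and $g=\infty$, but the pole cancellation is exactly the hard content.

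Granting constancy, I would finish by evaluating the now-constant sum in the two limits. As $g\to 0$, a factor with $w_{p,j}>0$ tends to $1$ and a factor with $w_{p,j}<0$ tends to $-y$, so a fixed point with exactly $i_p$ negative weights contributes $(-y)^{i_p}$; summing over $p$ gives $\chi_y(M)=\sum_{i=0}^{n}(-1)^i N_i\,y^i$, whence $\chi^i(M)=(-1)^i N_i$. As $g\to\infty$ the behavior reverses --- positive weights now contribute $-y$ and negative weights contribute $1$ --- so the same fixed point contributes $(-y)^{n-i_p}$; summing and reindexing gives $\chi^i(M)=(-1)^i N_{n-i}$. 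Matching the coefficient of $y^i$ in the two limits yields all three equalities at once, and in particular the symmetry $N_i=N_{n-i}$.
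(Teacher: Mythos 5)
The paper does not prove Theorem~\ref{t22} at all: it is imported from the literature with the citation \cite{HBJ, L}, so there is no internal proof to compare against, and the relevant question is whether your reconstruction matches the standard argument in those sources. It does, and it is correct in outline: the $\chi_y$-genus as the index of the Dolbeault-type ($\mathrm{Spin}^c$) operator twisted by $\Lambda^\bullet (T^{1,0}M)^*$, the Atiyah--Segal--Singer fixed-point contribution $\prod_{j=1}^n \frac{1+y\,g^{w_{p,j}}}{1-g^{w_{p,j}}}$ at an isolated fixed point, expansion in $y$ to get the $\sigma_i$ expression, and the limits $g\to 0$, $g\to\infty$ to extract $(-1)^i N_i$ and $(-1)^i N_{n-i}$ are exactly the proof found in \cite{HBJ} and in Li \cite{L}.

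Two refinements are worth making. First, the step you single out as the main obstacle --- constancy in $g$ --- does not require citing a rigidity theorem as a black box in the isolated-fixed-point setting; it falls out of your own limit computation. The equivariant index is a character of $S^1$, hence a finite Laurent polynomial in $g$; the fixed-point formula identifies it with your rational function away from finitely many points of the unit circle, hence as rational functions. Since every factor tends to $1$ or to $-y$ in each of the limits $g\to 0$ and $g\to\infty$, this Laurent polynomial has finite limits at $0$ and $\infty$ and is therefore constant, and evaluating the character at $g=1$ identifies the constant with the non-equivariant index $\chi^i(M)$. So ``pole cancellation at roots of unity'' never has to be addressed directly; this is precisely how the cited sources argue, and it is where the symmetry $N_i=N_{n-i}$ comes from in one stroke. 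Second, your side remark in the integrable case --- that $S^1$ acts trivially on each $H^q(M,\Omega^i)$ because it lies in the identity component of the biholomorphism group --- is not justified in general: homotopy invariance gives triviality on de Rham cohomology, and on Dolbeault cohomology one gets it, e.g., in the K\"ahler case, but for a general compact complex manifold what is true (and all you need) is constancy of the Lefschetz number, which again follows from the limit argument. Neither point is a gap in what you would actually cite and use; they only sharpen the logical economy of the proof.
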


Let the circle group act on a $2n$-dimensional compact almost complex manifold $M$. The equivariant Chern classes of $M$ are the ordinary Chern classes of the bundle $TM \times_{S^1} ES^1 \to M \times_{S^1} ES^1$. Let $j_1,j_2,\cdots,j_n$ be non-negative integers such that $j_1+2j_2+\cdots+nj_n=n$. We define the Chern number, also denoted $\int_M c_1^{j_1} c_2^{j_2} \cdots c_n^{j_n}$, by $\langle c_1^{j_1} c_2^{j_2} \cdots c_n^{j_n}, [M] \rangle$, where $[M]$ is the fundamental homology class of $M$.

Let $p$ be an isolated fixed point. The tangent space $T_pM$ at $p$ decomposes into complex 1-dimensional vector space $L_{p,1}$, $\cdots$, $L_{p,n}$, and the circle acts on each $L_{p,i}$ as multiplication by $g^{w_{p,i}}$ for all $g \in S^1 \subset \mathbb{C}$, where $w_{p,i}$ is a non-zero integer. These non-zero integers $w_{p,1}$, $\cdots$, $w_{p,n}$ are called the \textbf{weights} at $p$. The $i$-th equivariant Chern class $c_i$ at $p$ is
\begin{center}
$c_i|_p=\sigma_i(w_{p,1},\cdots,w_{p,n}) t^i$,
\end{center}
where $\sigma_i$ is the $i$-th elementary symmetric polynomial in $n$ variables, and $t$ is the degree 2 generator of $H^*(\mathbb{CP}^\infty;\mathbb{Z})$.

In \cite{GS}, Godinho and Sabatini computed the Chern number $\int_M c_1 c_{n-1}$ in terms of the number $N_i$ of fixed points with $i$ negative weights.

\begin{theorem} \cite{GS} \label{gsformula}
Let the circle act on a $2n$-dimensional compact almost complex manifold with a discrete fixed point set. For each $i$, let $N_i$ denote the number of fixed points with exactly $i$ negative weights. Then
\begin{center}
$\displaystyle \int_M c_1 c_{n-1}=\sum_{i=0}^n N_i \left[ 6i(i-1)+\frac{5n-3n^2}{2}\right].$
\end{center}
\end{theorem}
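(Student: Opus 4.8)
The plan is to prove the formula in two stages: first reduce it, via the index formula of Theorem~\ref{t22}, to a universal identity among Chern numbers that no longer mentions the action, and then establish that identity by a direct computation with the $\chi_y$-genus. Writing out the right-hand side I would split it as
\begin{equation*}
\sum_{i=0}^n N_i\left[6i(i-1) + \tfrac{5n-3n^2}{2}\right] = 6\sum_{i=0}^n i(i-1)N_i + \tfrac{5n-3n^2}{2}\sum_{i=0}^n N_i .
\end{equation*}
By Theorem~\ref{t22} we have $\chi^i(M) = (-1)^i N_i$, so $\sum_i N_i = \sum_i (-1)^i\chi^i(M) = \chi_{-1}(M) = \int_M c_n$ is the Euler characteristic, while differentiating $\chi_y(M) = \sum_i \chi^i(M)y^i$ twice and evaluating at $y=-1$ gives $\sum_i i(i-1)N_i = \frac{d^2}{dy^2}\chi_y(M)\big|_{y=-1}$ (the signs $(-1)^i$ from the derivative cancel those in $\chi^i=(-1)^iN_i$). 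Thus the theorem is equivalent to the purely cohomological statement
\begin{equation*}
\int_M c_1 c_{n-1} = 6\,\frac{d^2}{dy^2}\chi_y(M)\Big|_{y=-1} + \tfrac{5n-3n^2}{2}\int_M c_n ,
\end{equation*}
which should hold for every $2n$-dimensional compact almost complex manifold.

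To prove this identity I would compute the second derivative of the $\chi_y$-genus at $y=-1$ directly from its defining power series. The substitution $s = 1+y$ is convenient, since $y=-1$ becomes $s=0$ and the generating function becomes $Q(x,s) = \frac{x(1 + (s-1)e^{-xs})}{1 - e^{-xs}}$. Expanding in $s$ (both numerator and denominator vanish to first order, so the ratio is regular), I expect the first three coefficients to be
\begin{equation*}
Q(x,s) = (1+x) - \tfrac{x}{2}\,s + \tfrac{x^2}{12}\,s^2 + O(s^3),
\end{equation*}
where the cancellation of the $x^3$ term in the $s^2$-coefficient is the small miracle that makes the final constants clean. Writing $x_1,\dots,x_n$ for the Chern roots of $TM$, the $\chi_y$-genus is $\int_M \prod_a Q(x_a,s)$, and $\frac{d^2}{dy^2}\chi_y|_{y=-1}$ is twice the coefficient of $s^2$ in this product, namely
\begin{equation*}
\frac{d^2}{dy^2}\chi_y(M)\Big|_{y=-1} = \int_M\left[\tfrac{1}{6}\sum_a x_a^2\prod_{b\neq a}(1+x_b) + \tfrac{1}{2}\sum_{a<a'}x_a x_{a'}\prod_{b\neq a,a'}(1+x_b)\right].
\end{equation*}

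The final step is a symmetric-function reduction of the degree-$n$ part of this integrand to Chern numbers. I would verify, in top degree, the identities $\sum_a x_a^2\prod_{b\neq a}(1+x_b) \equiv c_1 c_{n-1} - n\,c_n$ and $\sum_{a<a'}x_a x_{a'}\prod_{b\neq a,a'}(1+x_b)\equiv \binom{n}{2}c_n$ (the first because each surviving monomial is $x_a\prod_{b\neq c}x_b$ and resums to $c_1c_{n-1}-nc_n$; the second because only the full product survives, contributing $c_n$ once per pair). These give $\frac{d^2}{dy^2}\chi_y|_{y=-1} = \frac{1}{6}\int_M c_1 c_{n-1} + \frac{3n^2-5n}{12}\int_M c_n$, and solving for $\int_M c_1 c_{n-1}$ reproduces exactly the cohomological identity above; combining with the first paragraph then yields the theorem.

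I expect the main obstacle to be purely bookkeeping: confirming the cancellation that produces $Q = (1+x) - \frac x2 s + \frac{x^2}{12}s^2 + O(s^3)$, and carrying out the two symmetric-function reductions without error. As a safeguard I would cross-check the resulting identity on $\mathbb{CP}^n$, where $\chi_y = \sum_{i=0}^n(-y)^i$ and $\int_M c_1 c_{n-1}$ is computed from $c(\mathbb{CP}^n)=(1+x)^{n+1}$, and on a case with $c_1=0$ such as a K3 surface (where the identity forces $0 = 6\cdot 4 - 24$), both of which pin down and confirm the universal constants $6$ and $\tfrac{5n-3n^2}{2}$.
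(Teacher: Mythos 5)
Your proof is correct, and the key computations all check out: the expansion $Q(x,s)=(1+x)-\frac{x}{2}\,s+\frac{x^2}{12}\,s^2+O(s^3)$ in $s=1+y$ (including the cancellation of the $x^3$ term in the $s^2$-coefficient), the two top-degree symmetric-function identities $\sum_a x_a^2\prod_{b\neq a}(1+x_b)\equiv c_1c_{n-1}-nc_n$ and $\sum_{a<a'}x_ax_{a'}\prod_{b\neq a,a'}(1+x_b)\equiv \binom{n}{2}c_n$, the sign bookkeeping giving $\sum_i i(i-1)N_i=\frac{d^2}{dy^2}\chi_y(M)\big|_{y=-1}$ and $\sum_i N_i=\int_M c_n$ from Theorem \ref{t22}, and the resulting constant $\frac{3n^2-5n}{12}$. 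Note that this paper contains no internal proof to compare against --- Theorem \ref{gsformula} is quoted from \cite{GS} --- but your route is essentially the standard one: your cohomological identity $\frac{d^2}{dy^2}\chi_y(M)\big|_{y=-1}=\frac{1}{6}\int_M c_1c_{n-1}+\frac{n(3n-5)}{12}\int_M c_n$ is precisely the Libgober--Wood identity, and combining it with the index-theoretic statement $\chi^i(M)=(-1)^iN_i$ is exactly how Godinho and Sabatini obtain the formula.
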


The Hirzebruch $\chi_y$-genus satisfies a property that two consecutive coefficients are non-zero.

\begin{lemma} \label{l27} \cite{JT, J2}
Let the circle act on a compact almost complex manifold with a non-empty discrete fixed point set such that $\dim M>0$. Then there exists $i$ such that $N_i \neq 0$ and $N_{i+1} \neq 0$, where $N_j$ is the number of fixed points that have exactly $j$ negative weights. Alternatively, there exists $i$ such that $\chi^i(M) \neq 0$ and $\chi^{i+1}(M) \neq 0$. \end{lemma}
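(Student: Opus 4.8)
The final statement to prove is Lemma \ref{l27}: for a circle action on a compact almost complex manifold with nonempty discrete fixed point set and positive dimension, there exists an index $i$ with both $\chi^i(M)\neq 0$ and $\chi^{i+1}(M)\neq 0$ (equivalently, two consecutive $N_i$ are nonzero).

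\begin{proof}[Proof plan]
The plan is to work directly with the Hirzebruch $\chi_y$-genus, viewed as the polynomial $\chi_y(M)=\sum_{i=0}^n \chi^i(M)\,y^i$, and to exploit its behavior at the special value $y=-1$. By Theorem~\ref{t22} we have $\chi^i(M)=(-1)^i N_i$, so $\chi^i(M)$ and $\chi^{i+1}(M)$ are simultaneously nonzero exactly when $N_i$ and $N_{i+1}$ are; the two formulations in the statement are therefore equivalent, and I will prove the statement about the $N_i$. First I would record that $\chi_{-1}(M)=\sum_{i=0}^n \chi^i(M)(-1)^i=\sum_{i=0}^n N_i$ is the Euler characteristic, which equals the total number of fixed points $k$; since the fixed set is nonempty, $k\geq 1$, so at least one $N_i$ is nonzero. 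The substance of the lemma is that the nonzero $N_i$ cannot be ``spread out'' with gaps.

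The key idea I would pursue is to evaluate $\chi_y(M)$ at $y=-1$ after suitable normalization and show that if the nonzero $N_i$ were supported on a set of indices no two of which are consecutive, then $\chi_y(M)$ would be forced to vanish somewhere it cannot. Concretely, consider the rescaled genus $\chi_{-1}(M)$ together with the observation that each local contribution in the Atiyah-Singer formula of Theorem~\ref{t22},
\begin{center}
$\displaystyle \frac{\sigma_i(g^{w_{p,1}},\dots,g^{w_{p,n}})}{\prod_{j=1}^n(1-g^{w_{p,j}})}$,
\end{center}
assembles, when summed over $i$ with weight $y^i$, into the single rational function $\prod_{j=1}^n \frac{1+yg^{w_{p,j}}}{1-g^{w_{p,j}}}$ at each fixed point $p$. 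I would use this product form to understand the polynomial $\chi_y(M)$ as a genuine evaluation and, crucially, to control its value and the values of its derivatives at $y=-1$. The symmetry $N_i=N_{n-i}$ from Lemma~\ref{l33}(3) (equivalently $\chi^i(M)=(-1)^i\chi^{n-i}(M)$) shows the support of the nonzero $N_i$ is symmetric about $n/2$, which I would combine with a parity/counting argument on the location of nonzero coefficients.

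The cleanest route I expect is a contradiction argument: suppose no two consecutive $N_i$ are nonzero, so the support $S=\{i: N_i\neq 0\}$ is an ``independent set'' in the path on $\{0,1,\dots,n\}$. I would then derive a constraint that is incompatible with $S$ being nonempty and symmetric under $i\mapsto n-i$. One promising mechanism is to show that under the no-two-consecutive hypothesis the polynomial $\chi_y(M)$ (or a related generating function built from the local product expressions) has a root structure at $y=-1$ that contradicts $\chi_{-1}(M)=k>0$; alternatively, one can track the alternating sum $\sum_i (-1)^i N_i$ against a second weighted sum and force a nonzero coefficient adjacent to an existing one. I anticipate the main obstacle to be precisely this final combinatorial-analytic step: translating the product structure of the local data into a rigid statement that forbids gaps in the support, rather than merely constraining the total $\sum N_i$. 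Handling the boundary indices $i=0$ and $i=n$ and the case $n$ odd versus even (where the symmetry center $n/2$ is or is not an integer index) will require separate attention, and the reference to \cite{JT, J2} suggests the original argument invokes a dimension/connectivity input on $M$ that I would need to supply to close the gap-freeness claim.
\end{proof}
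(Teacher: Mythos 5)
Your proposal is not a proof but a plan, and the gap you yourself flag at the end is fatal rather than a detail to be filled in later. Every tool you actually put on the table --- the product form of the local contributions, the evaluation $\chi_{-1}(M)=\sum_i N_i=k>0$, the symmetry $N_i=N_{n-i}$ from Lemma \ref{l33}, and nonnegativity of the $N_i$ --- constrains only the numerical vector $(N_0,\dots,N_n)$, and these constraints cannot forbid gapped support. Concretely, the data $N_0=N_n=1$ and $N_i=0$ for $0<i<n$ is nonnegative, symmetric under $i\mapsto n-i$, sums to $k=2$, and gives $\chi_y(M)=1+(-1)^ny^n$, which exhibits no pathology at $y=-1$ or in any derivative there. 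So no contradiction of the kind you hope for ("root structure at $y=-1$ incompatible with $k>0$") can exist: any argument using only the formal $\chi_y$-package would have to rule out this vector, and it cannot. (This vector is ruled out geometrically, not numerically: for the 6-dimensional action on $S^6$ with two fixed points one in fact has $N_1=N_2=1$, not $N_0=N_3=1$.)

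The missing ingredient --- which the paper does not prove either, citing \cite{JT, J2} --- is genuinely geometric input about the weights, not about the genus. The argument in those references uses the fact that, summed over all fixed points, each weight $w$ occurs as often as $-w$; one then takes a weight $w$ of smallest magnitude and passes to a connected component of the isotropy submanifold $M^{\mathbb{Z}/w}$, on which (by minimality of $w$) all weights are $\pm w$. Analyzing the induced action on this component produces two fixed points of $M$ whose numbers of negative weights differ by exactly one, which is the gap-freeness claim. Your plan correctly reduces the two formulations to each other via $\chi^i(M)=(-1)^iN_i$ and correctly identifies that some extra input is needed, but it supplies no candidate for that input, and the route you sketch (purely through values and symmetries of $\chi_y$) provably cannot succeed.
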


For a manifold $M$, let $\chi(M)$ denote the Euler number of $M$. For a circle action on a compact manifold, the Euler number of the manifold is equal to the sum of Euler numbers of its fixed components.

\begin{theorem} \label{Euler} \cite{K}
Let the circle act on a compact manifold. Then
\begin{center}
$\displaystyle \chi(M)=\sum_{F \subset M^{S^1}} \chi(F)$.
\end{center}
\end{theorem}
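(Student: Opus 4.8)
The plan is to deduce the formula from the Lefschetz fixed point theorem applied to a single well-chosen element of the circle. First I would note that the Euler number is the Lefschetz number of the identity: $\chi(M) = L(\mathrm{id}_M) = \sum_i (-1)^i \dim H^i(M;\mathbb{Q})$. Since every $g \in S^1$ is joined to the identity through the action, the diffeomorphism $x \mapsto g \cdot x$ is homotopic to $\mathrm{id}_M$, so $L(g) = L(\mathrm{id}_M) = \chi(M)$ for all $g$. Hence it suffices to evaluate $L(g)$ for one convenient $g$ using the geometric (fixed-point) side of the theorem.

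Next I would choose $g = e^{2\pi i \theta}$ with $\theta$ irrational; concretely, I pick $g$ so that $g^{w} \neq 1$ for every weight $w$ occurring in the normal bundle of any fixed component (there are finitely many such nonzero integers $w$, so a generic $g$ works). With this choice the fixed set $M^g = \{x : g\cdot x = x\}$ coincides with $M^{S^1}$: any point fixed by $g$ but moved by the full circle would have a normal direction carrying a nonzero weight $w$ with $g^{w}=1$, which we have excluded. Thus $M^g = M^{S^1} = \bigsqcup_F F$, the disjoint union of the fixed components, each a compact submanifold.

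Then I would invoke the version of the Lefschetz fixed point formula valid when the fixed set of a smooth map is a disjoint union of submanifolds that are nondegenerate in the normal direction (no normal eigenvalue $1$). For such a map each component $F$ contributes
\[
\mathrm{sign}\big(\det(I - dg|_{N_F})\big)\cdot \chi(F),
\]
where $N_F$ is the normal bundle of $F$ and $dg|_{N_F}$ is the induced linear map. The point special to circle actions is that $N_F$ splits into complex line bundles (real rank-$2$ summands) on each of which $g$ acts as rotation by a nonzero angle $\alpha = 2\pi w\theta$. On each rotation summand $R_\alpha$ one computes
\[
\det(I - R_\alpha) = (1-\cos\alpha)^2 + \sin^2\alpha = 2 - 2\cos\alpha = 4\sin^2(\alpha/2) > 0,
\]
so $\det(I - dg|_{N_F})$ is a product of positive numbers and the sign factor is $+1$. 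Hence each $F$ contributes exactly $\chi(F)$, giving $L(g) = \sum_F \chi(F)$. Comparing the two evaluations of $L(g)$ yields $\chi(M) = \sum_{F \subset M^{S^1}} \chi(F)$.

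The main obstacle is this last step: one must justify the submanifold version of the Lefschetz formula and carry out the normal-bundle sign computation showing that every local contribution is $+\chi(F)$ rather than $\pm\chi(F)$. The positivity is precisely where the circle-action structure is used — all normal weights are nonzero, so $g$ acts by genuine rotations and each factor $\det(I-R_\alpha)$ is positive — and it is exactly what forces an unsigned sum. I would cite the classical Lefschetz-for-submanifolds result to supply the formula itself and keep the verification to the elementary rotation computation above.
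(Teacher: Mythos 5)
The paper does not actually prove this statement: Theorem \ref{Euler} is quoted as a classical result with a citation to Kobayashi \cite{K}, so there is no internal proof to match. Your argument is the standard ``topological generator'' proof and it is essentially correct: replace the identity by a single element $g$ homotopic to it through the action, so $L(g)=\chi(M)$, identify $M^g$ with $M^{S^1}$, and apply the Lefschetz fixed point formula for nondegenerate fixed submanifolds, with the circle structure forcing every normal contribution $\det(I-R_\alpha)=4\sin^2(\alpha/2)$ to be positive and hence every local term to be $+\chi(F)$. This is exactly the mechanism that makes the sum unsigned, which is the heart of the statement, and your computation of it is right. What this buys over the paper's treatment is a self-contained argument; what the citation buys the paper is avoiding the technical hypotheses you must still verify, namely that $M^g$ is a closed submanifold and that $dg$ preserves a normal bundle. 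For that you should fix an $S^1$-invariant Riemannian metric (average any metric over the group), so that $g$ is an isometry and $M^g$ is a disjoint union of closed totally geodesic submanifolds with $T_xM^g=\ker(dg_x-I)$; this also gives the nondegeneracy in the normal directions for free.

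One step as written is imprecise, though easily patched: your justification that $M^g=M^{S^1}$ appeals to ``a normal direction carrying a nonzero weight $w$'' at a point fixed by $g$ but moved by the circle, but such a point need not lie near any $S^1$-fixed component, so the weights of the normal bundles of fixed components are not the relevant integers. The correct argument is group-theoretic: the stabilizer of such a point is a proper closed subgroup of $S^1$, hence finite cyclic, and it cannot contain $g=e^{2\pi i\theta}$ with $\theta$ irrational, since $g$ then has infinite order. (Equivalently, by compactness $M$ has finitely many orbit types, so one may choose $g$ avoiding the finitely many finite isotropy orders.) Since you did choose $\theta$ irrational, the conclusion $M^g=M^{S^1}$ stands; only the stated reason needs replacing.
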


For a circle action on a compact oriented manifold, if the action has an odd number of fixed points, then the dimension of the manifold is a multiple of 4.

\begin{lemma} \label{even}
Let the circle group act on a compact oriented manifold $M$ with a non-empty discrete fixed point set. If the dimension of $M$ is not divisible by 4, then the number of fixed points is even.
\end{lemma}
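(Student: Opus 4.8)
The plan is to reduce the statement to the classical fact that a closed oriented manifold whose dimension is congruent to $2$ modulo $4$ has even Euler characteristic, and then to identify the number of fixed points with the Euler characteristic via Theorem \ref{Euler}.

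First I would observe that since the fixed point set is discrete, the dimension of $M$ must be even. Indeed, at an isolated fixed point $p$ the circle acts on $T_pM$ with no nonzero fixed vector, so $T_pM$ splits as a direct sum of two-dimensional real weight spaces on which the circle acts by rotation; hence $\dim M$ is even, say $\dim M = 2n$. Combined with the hypothesis that $\dim M$ is not divisible by $4$, this forces $n$ to be odd. Next I would apply Theorem \ref{Euler}: since the fixed point set is discrete, every fixed component is a single point contributing $\chi(\mathrm{pt}) = 1$, so
\[
\chi(M) = \sum_{F \subset M^{S^1}} \chi(F) = k,
\]
where $k$ is the number of fixed points. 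Thus it suffices to prove that $\chi(M)$ is even.

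To show $\chi(M)$ is even, I would use Poincar\'e duality together with the algebraic structure of the middle-dimensional cohomology. Writing $b_i = \dim H^i(M;\mathbb{R})$, Poincar\'e duality gives $b_i = b_{2n-i}$, and since $2n$ is even the signs in $\chi(M) = \sum_{i=0}^{2n} (-1)^i b_i$ agree for $i$ and $2n-i$; hence the off-middle terms contribute the even integer $2\sum_{i<n}(-1)^i b_i$, and $\chi(M) \equiv (-1)^n b_n \pmod 2$. Because $n$ is odd, the cup product pairing $H^n(M;\mathbb{R}) \times H^n(M;\mathbb{R}) \to H^{2n}(M;\mathbb{R}) \cong \mathbb{R}$ is skew-symmetric, and by Poincar\'e duality it is nondegenerate; a nondegenerate skew-symmetric form forces its underlying space to have even dimension, so $b_n$ is even. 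Therefore $\chi(M)$ is even, and consequently $k = \chi(M)$ is even.

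The argument is essentially standard, so I do not expect a serious obstacle; the only point requiring care is the claim that the middle intersection form is skew-symmetric and nondegenerate, which rests on the orientability of $M$ (to invoke Poincar\'e duality over $\mathbb{R}$) together with $n$ being odd. I would also emphasize that the hypothesis of a non-empty discrete fixed point set is used only to guarantee that $\dim M$ is even and to identify $\chi(M)$ with the number of fixed points, so that the parity conclusion then follows purely from the topology of $M$.
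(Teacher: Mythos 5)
Your proposal is correct and follows essentially the same route as the paper: both identify the number of fixed points with $\chi(M)$ via Theorem \ref{Euler} and then invoke the parity of the Euler characteristic of a compact oriented manifold whose dimension is not divisible by $4$. The only difference is that you spell out the proof of this classical parity fact (via Poincar\'e duality and the skew-symmetric middle intersection pairing), which the paper simply cites as known.
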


\begin{proof}
Since $M$ is compact and oriented and the dimension of $M$ is not a multiple of 4, the Euler number of $M$ is even. By Theorem \ref{Euler}, the Euler number of $M$ is equal to the sum of the Euler numbers of its fixed components, which are isolated fixed points. Because the Euler number of a point is 1, this lemma holds. 
\end{proof}

A classification of a circle action on a compact, connected almost complex manifold with few fixed points is as follows.

\begin{theorem} \label{few} \cite{J3}
Let the circle act on a compact, connected almost complex manifold $M$.
\begin{enumerate}
\item If the action has exactly one fixed point $p$, then $M$ is the point itself. That is, $M=\{p\}$.
\item If the action has exactly two fixed points, then $M$ is the 2-sphere, or $\dim M=6$ and 
\item If the action has exactly three fixed points, then $\dim M=4$
\end{enumerate}
\end{theorem}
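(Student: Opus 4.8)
The plan is to argue separately according to the number $k$ of fixed points, writing $\dim M = 2n$; we may assume $M$ is connected (all manifolds in the conclusion are), so that $\dim M > 0$ whenever $k \ge 2$. Since $k \le 3$, the fixed point set is automatically finite, hence discrete, so Theorem \ref{t22} and Lemma \ref{l33} apply. The first step, common to all three cases, is to record the combinatorial constraints on the numbers $N_i$ of fixed points with exactly $i$ negative weights: by Theorem \ref{t22} we have $N_i = N_{n-i} \ge 0$ and $\sum_{i=0}^n N_i = k$, and by Lemma \ref{l27}, as soon as $\dim M > 0$ there is an index $i$ with $N_i \ne 0$ and $N_{i+1} \ne 0$.

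These constraints settle the case $k = 1$ at once: a single fixed point makes exactly one $N_i$ nonzero, so no two consecutive indices can both be nonzero, and Lemma \ref{l27} forces $\dim M = 0$; thus $M$ is a point. For $k = 2$, if some $N_i = 2$ then again only one index is nonzero, contradicting Lemma \ref{l27}; hence two distinct indices carry the value $1$, and they must be consecutive, say $N_a = N_{a+1} = 1$. The symmetry $N_i = N_{n-i}$ forces $\{a, a+1\}$ to be invariant under $i \mapsto n - i$, so $n = 2a+1$ is odd and the only nonzero values are $N_{(n-1)/2} = N_{(n+1)/2} = 1$. For $k = 3$, Lemma \ref{even} (three is odd) gives $4 \mid \dim M$, so $n = 2m$ is even; the pairs $\{i, n-i\}$ with $i \ne m$ contribute an even total, so the self-paired $N_m$ is odd, and $N_m = 3$ is excluded by Lemma \ref{l27}. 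Hence $N_m = 1$ and a single symmetric pair carries the value $1$; consecutiveness pins it down to $N_{m-1} = N_m = N_{m+1} = 1$.

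At this point the negativity pattern is completely determined in each case, and the remaining and decisive task is to bound $n$: to show $n \in \{1,3\}$ when $k = 2$ and $n = 2$ when $k = 3$. I would do this through the rigidity of the equivariant $\chi_y$-genus. By Theorem \ref{t22}, for every $i$ the rational function of the indeterminate $g$ given by $\sum_{p} \sigma_i(g^{w_{p,1}}, \dots, g^{w_{p,n}}) / \prod_{j} (1 - g^{w_{p,j}})$ equals the constant $(-1)^i N_i$; packaging these over $i$, the function $\sum_{p} \prod_{j}(1 + y\, g^{w_{p,j}}) / \prod_{j}(1 - g^{w_{p,j}})$ is independent of $g$. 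The leverage comes from two sources. First, comparing the leading behaviour of this identity as $g \to 0$ and as $g \to \infty$, together with the vanishing $\int_M 1 = 0$ (which, via Theorem \ref{t21}, reads $\sum_p 1 / \prod_j w_{p,j} = 0$) and the integrality of the Chern numbers, constrains the actual weight multisets $\{w_{p,j}\}$, not merely their signs. Second, for each integer $d \ge 2$ one passes to the $\mathbb{Z}_d$-isotropy submanifold $M^{\mathbb{Z}_d}$: its component through a fixed point $p$ is an almost complex submanifold whose tangent space is spanned by the $L_{p,j}$ with $d \mid w_{p,j}$, it carries an induced $S^1/\mathbb{Z}_d$-action, and its fixed points are exactly the $S^1$-fixed points of $M$ lying on it. Applying the already-established constraints to these lower-dimensional manifolds controls how often a given divisor can occur among the weights, and an induction on dimension then forces $n$ down to the claimed value.

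I expect this last step --- the weight analysis via the $\chi_y$-rigidity and the isotropy submanifolds --- to be the main obstacle; the combinatorics of the $N_i$ above is elementary, but ruling out all large $n$ requires a careful case analysis of the multiset of weights at the two or three fixed points. Once $n$ is bounded, the endgame is immediate: for $k = 2$ with $n = 1$, the manifold is a closed oriented surface with $\chi(M) = 2$ by Theorem \ref{Euler}, hence $M = S^2$; for $k = 2$ the value $n = 3$ is exactly the $\dim M = 6$ alternative; and for $k = 3$ we obtain $\dim M = 4$ directly.
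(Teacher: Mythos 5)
The paper does not prove this statement at all---it is quoted from \cite{J3} as a known classification---so there is no internal proof to compare against; your proposal has to stand on its own, and it does not. Your first stage, the combinatorics of the $N_i$, is correct and cleanly done: Theorem \ref{t22} gives $N_i=N_{n-i}$ and $\sum_i N_i=k$, Lemma \ref{l27} forces two consecutive nonzero $N_i$'s, and Lemma \ref{even} handles the parity for $k=3$; your deductions that $k=1$ forces $\dim M=0$, that $k=2$ forces $n$ odd with $N_{(n-1)/2}=N_{(n+1)/2}=1$, and that $k=3$ forces $n=2m$ with $N_{m-1}=N_m=N_{m+1}=1$ are all valid. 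But this stage is the easy part, and it leaves the actual content of the theorem untouched: nothing in it excludes $k=2$ with $n=5,7,9,\dots$, or $k=3$ with $n=4,6,\dots$.

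The dimension bound is exactly where your proof stops being a proof. The paragraph invoking $\chi_y$-rigidity, the vanishing $\sum_p 1/\prod_j w_{p,j}=0$, and the isotropy submanifolds $M^{\mathbb{Z}_d}$ names a plausible toolkit---it is indeed the kind of machinery used in \cite{J3} and in the Kosniowski--Musin tradition for the two-fixed-point case---but you carry out none of it: there is no analysis of the weight multisets, no identification of how the weights at the two (or three) fixed points must be related (e.g.\ that for two fixed points the weights at $q$ are the negatives of those at $p$), and no argument for why the induction on isotropy submanifolds terminates at $n=3$ rather than merely reducing to smaller cases. Note also that the induction cannot simply ``force $n$ down'': $n=3$ genuinely occurs ($S^6$), so the argument must distinguish $n=3$ from $n=5$ by fine properties of the weights, which is precisely the delicate case analysis you defer. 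You acknowledge this yourself (``I expect this last step \dots\ to be the main obstacle''), and since that step is the theorem, the proposal is a correct reduction plus a research plan, not a proof.
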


\section{Chern numbers of high dimensional manifolds with 4 fixed points}

In this section, we use \cite[Lemma 3.2]{J5} and the ABBV localization theorem (Theorem \ref{t21}) to show Lemma \ref{vanish} that certain Chern numbers vanish for a circle action on a high dimensional compact almost complex manifold with 4 fixed points. First, we recall \cite[Lemma 3.2]{J5}.

\begin{lemma} \label{l32} \cite{J5}
Let $n \geq 4$. Let the circle act on a $2n$-dimensional compact almost complex manifold $M$ with 4 fixed points. Then one of the following holds:
\begin{enumerate}[(1)]
\item $\displaystyle \sum_{i=1}^n w_{p,i}=0$ for all $p \in M^{S^1}$ and $\displaystyle \sum_{p \in M^{S^1}} \frac{1}{\prod_{i=1}^n w_{p,i}}=0$.
\item We can divide the fixed points into 2 pairs $(q_1,q_2)$ and $(q_3,q_4)$ such that
\begin{enumerate}[(a)]
\item $\displaystyle  \prod_{i=1}^n w_{q_1,i}=-\prod_{i=1}^n w_{q_2,i}$,
\item $\displaystyle  \prod_{i=1}^n w_{q_3,i}=-\prod_{i=1}^n w_{q_4,i}$, and
\item $\displaystyle  \sum_{i=1}^n w_{q_1,i}=\sum_{i=1}^n w_{q_2,i}=-\sum_{i=1}^n w_{q_3,i}=-\sum_{i=1}^n w_{q_4,i}$.
\end{enumerate}
\end{enumerate}
\end{lemma}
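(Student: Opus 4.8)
The plan is to extract algebraic relations among the fixed-point data by feeding powers of the equivariant first Chern class into the ABBV localization theorem (Theorem \ref{t21}), and then to read off the stated dichotomy from a Vandermonde argument. For a fixed point $p$ with weights $w_{p,1},\dots,w_{p,n}$, write $\Sigma_p=\sum_{i=1}^n w_{p,i}$ and $\Pi_p=\prod_{i=1}^n w_{p,i}$, so that $c_1|_p=\Sigma_p\,t$ and the equivariant Euler class of $T_pM$ is $\Pi_p\,t^n$. Since $\dim M=2n$, for every $k$ with $0\le k\le n-1$ the class $c_1^k$ has cohomological degree $2k<2n$, so $\int_M c_1^k\in H^{2k-2n}(\mathbb{CP}^\infty)=0$. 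Applying Theorem \ref{t21} to $\alpha=c_1^k$ therefore yields
\[
\sum_{p\in M^{S^1}}\frac{\Sigma_p^{\,k}}{\Pi_p}=0,\qquad 0\le k\le n-1 .
\]
In particular $k=0$ gives $\sum_p 1/\Pi_p=0$, which is exactly the second assertion of alternative (1) and so holds automatically.

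First I would exploit that there are exactly four fixed points while $n\ge 4$, so the displayed relations are available at least for $k=0,1,2,3$. Regard the four nonzero numbers $x_p:=1/\Pi_p$ as unknowns; the relations say that the vector $(x_p)_p$ is annihilated by the rows $(\Sigma_p^{\,k})_p$ of a Vandermonde-type matrix in the weight sums $\Sigma_p$. If the four values $\Sigma_p$ were pairwise distinct, this matrix would be invertible and force every $x_p=0$, contradicting $w_{p,i}\ne 0$. Grouping the fixed points by the value of $\Sigma_p$ and applying the same argument to the collapsed system shows that, for each value $v$ attained, $\sum_{\Sigma_p=v}x_p=0$; since each $x_p\ne 0$, every such group contains at least two points. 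With four fixed points this leaves only two possibilities: a single group of four (all weight sums equal) or two groups of two. In the two-pair case the cancellations $x_{q_1}+x_{q_2}=0$ and $x_{q_3}+x_{q_4}=0$ are precisely $\Pi_{q_1}=-\Pi_{q_2}$ and $\Pi_{q_3}=-\Pi_{q_4}$, i.e.\ conditions (2)(a) and (2)(b).

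The hard part will be to pin down the weight sums themselves: I must show that the common weight sum vanishes in the first possibility (yielding alternative (1)) and that the two pair-sums are negatives of one another in the second (yielding condition (2)(c)). The power-sum relations above are useless for this, because within an equal-sum group they cancel automatically regardless of the value of $v$. To get traction I would invoke the full family of localization identities $\sum_p Q(w_{p,1},\dots,w_{p,n})/\Pi_p=0$, valid for every symmetric polynomial $Q$ of degree less than $n$ (coming from $\int_M(\text{Chern monomial of degree}<2n)=0$), since these are sensitive to the within-pair differences of the weights. Combining them with the symmetry $N_i=N_{n-i}$ of the $\chi_y$-genus in Theorem \ref{t22}, and with the involution $g\mapsto g^{-1}$ that reverses the action — negating every weight, hence sending $\Sigma_p\mapsto-\Sigma_p$ and interchanging $N_i\leftrightarrow N_{n-i}$ — should force the weight sums into the required symmetric configuration and, in particular, rule out a nonzero common sum in the single-group case. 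Matching the reversed-action data against the original pairing is where I expect the essential difficulty to lie.
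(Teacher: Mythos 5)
Your skeleton is sound and, in fact, close to the actual argument (note that the paper does not prove this lemma at all — it imports it from \cite{J5}, whose proof runs along similar localization lines). The identities $\sum_{p}\Sigma_p^k/\Pi_p=0$ for $0\le k\le n-1$ do follow from Theorem \ref{t21} applied to $c_1^k$, since the push-forward of a class of degree $2k<2n$ lands in $H^{2k-2n}(\mathbb{CP}^\infty)=0$; the hypothesis $n\ge 4$ supplies exactly the four relations $k=0,1,2,3$ needed for the Vandermonde step; and the grouping argument correctly yields $\sum_{\Sigma_p=v}1/\Pi_p=0$ for each attained value $v$, hence every group has size at least $2$ (a size-$3$ group is excluded only because its complement would be a singleton), hence either one group of four or two pairs, giving (2)(a) and (2)(b) in the latter case.

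The last step — the common sum is $0$ in the single-group case, and $b=-a$ in the two-pair case — is a genuine gap, and neither of the tools you propose closes it. Reversing the action via $g\mapsto g^{-1}$ produces a \emph{different} action whose fixed-point data is by construction the negation of the original; every localization or $\chi_y$-genus identity for the reversed action is equivalent to one for the original (for homogeneous symmetric $Q$, the reversed identity differs by an overall sign), so no new constraint on $\{\Sigma_p\}$ appears — the identification of $\{\Sigma_p\}$ with $\{-\Sigma_p\}$ is precisely what needs proof. Moreover, identities depending only on $\Sigma_p$ and $\Pi_p$ can never rule out a nonzero common sum: four points with all $\Sigma_p=a\neq 0$ and $\sum_p 1/\Pi_p=0$ satisfy $\int_M c_1^k=a^k\sum_p 1/\Pi_p=0$ for every $k$, and your suggestion to use general symmetric $Q$ of degree $<n$ is left entirely undeveloped. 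The missing ingredient is the classical weight-symmetry theorem for almost complex (unitary) $S^1$-manifolds with a discrete fixed point set, due to Hattori and available through the paper's own references (see \cite{GS}, \cite{L}): for every integer $w$, the number of occurrences of $w$ as a weight over all fixed points equals the number of occurrences of $-w$; in particular $\sum_{p\in M^{S^1}}\sum_{i=1}^n w_{p,i}=0$. With that single fact your case analysis finishes in one line: in the single-group case $4a=0$ forces $a=0$, which together with your $k=0$ relation is alternative (1), and in the two-pair case $2a+2b=0$ forces $b=-a$, which is condition (2)(c).
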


In both cases of Lemma \ref{l32}, the following holds.

\begin{lemma} \label{divide}
Let $n \geq 4$. Let the circle act on a $2n$-dimensional compact almost complex manifold $M$ with 4 fixed points. Then we can divide the fixed points into 2 pairs $(q_1,q_2)$ and $(q_3,q_4)$ such that
\begin{center}
$\displaystyle  \sum_{i=1}^n w_{q_1,i}=\sum_{i=1}^n w_{q_2,i}=-\sum_{i=1}^n w_{q_3,i}=-\sum_{i=1}^n w_{q_4,i}$.
\end{center}
\end{lemma}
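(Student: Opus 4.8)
The plan is to deduce Lemma~\ref{divide} directly from Lemma~\ref{l32}, of which it is a strict weakening: Lemma~\ref{divide} retains only the additive information about the weight sums $\sum_{i=1}^n w_{p,i}$ and discards the multiplicative conditions (namely the product conditions (a), (b) in case (2) and the vanishing of $\sum_p 1/\prod_i w_{p,i}$ in case (1)). So the proof is an immediate case analysis following the dichotomy supplied by Lemma~\ref{l32}.

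First I would invoke Lemma~\ref{l32}, which applies since $n \geq 4$ and $M$ has exactly $4$ fixed points, yielding the two exhaustive cases. In case (2), the fixed points already come partitioned into pairs $(q_1,q_2)$ and $(q_3,q_4)$, and condition (c) reads exactly
\begin{center}
$\displaystyle \sum_{i=1}^n w_{q_1,i}=\sum_{i=1}^n w_{q_2,i}=-\sum_{i=1}^n w_{q_3,i}=-\sum_{i=1}^n w_{q_4,i}$,
\end{center}
which is verbatim the conclusion of Lemma~\ref{divide}. So nothing needs to be done here beyond transcribing (c).

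The only point requiring a remark is case (1), where $\sum_{i=1}^n w_{p,i}=0$ for every $p \in M^{S^1}$. Here I would observe that since all four weight sums vanish, any partition of the four fixed points into two pairs works: labelling them $(q_1,q_2)$ and $(q_3,q_4)$ in an arbitrary fashion gives $\sum_{i=1}^n w_{q_j,i}=0$ for each $j$, and the chain of equalities in the conclusion holds because $0=0=-0=-0$. Thus in both cases the desired pairing exists.

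I do not expect a genuine obstacle in this argument, since Lemma~\ref{l32} has already done the substantive work; the content here is purely organizational, isolating the additive relations that will be used later while forgetting the rest. If anything, the subtlety to flag is merely the trivial observation in case (1) that the vanishing of all weight sums makes the claimed equalities hold for an arbitrary choice of pairing, so that the conclusion is insensitive to how the fixed points are grouped.
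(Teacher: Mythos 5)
Your proposal is correct and matches the paper's intent: the paper states Lemma~\ref{divide} as an immediate consequence of Lemma~\ref{l32}, and your case analysis (case (2)(c) gives the conclusion verbatim; in case (1) all weight sums vanish so any pairing works) is exactly the implicit argument. Nothing is missing.
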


Using Lemma \ref{divide}, we show that certain Chern numbers vanish for a circle action on a high-dimensional compact almost complex manifold with 4 fixed points.

\begin{lemma} \label{pre-vanish}
Let $n \geq 4$. Let the circle act on a $2n$-dimensional compact almost complex manifold $M$ with 4 fixed points. Let $j_1,j_2,\cdots,j_n$ be non-negative integers such that $j_1+2j_2+\cdots+nj_n=n$. If $j_1 \geq 2$, then 
\begin{center}
$\displaystyle \int_M c_1^{j_1} c_2^{j_2} \cdots c_n^{j_n}=0$.
\end{center}
\end{lemma}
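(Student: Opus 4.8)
The plan is to evaluate the Chern number through the ABBV localization theorem (Theorem~\ref{t21}) at the four isolated fixed points, and then to exploit the constraint on the weight sums furnished by Lemma~\ref{divide}. Recall that at each fixed point $p$ the equivariant Chern classes restrict as $c_k|_p = \sigma_k(w_{p,1},\dots,w_{p,n})\,t^k$ and the equivariant Euler class of the normal bundle is $e_{S^1}(N_p) = \bigl(\prod_{i=1}^n w_{p,i}\bigr)\,t^n$. Since $j_1 + 2j_2 + \cdots + n j_n = n$, all powers of $t$ cancel in the quotient, so localization gives
\begin{equation*}
\int_M c_1^{j_1}c_2^{j_2}\cdots c_n^{j_n}
= \sum_{p \in M^{S^1}} \frac{\sigma_1(w_p)^{j_1}\,\prod_{k=2}^n \sigma_k(w_p)^{j_k}}{\prod_{i=1}^n w_{p,i}},
\end{equation*}
where I abbreviate $w_p = (w_{p,1},\dots,w_{p,n})$.

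The central idea is a degree-reduction that becomes available precisely because $j_1 \geq 2$: I factor one copy of $\sigma_1(w_p)^2$ out of each numerator. By Lemma~\ref{divide}, after relabeling, $\sigma_1(w_p)=\sum_{i=1}^n w_{p,i}$ equals $s$ at two of the fixed points and $-s$ at the other two, for a single value $s$; hence $\sigma_1(w_p)^2 = s^2$ is the \emph{same} constant at all four fixed points. Pulling this constant out of the sum, the Chern number becomes
\begin{equation*}
\int_M c_1^{j_1}c_2^{j_2}\cdots c_n^{j_n}
= s^2 \sum_{p \in M^{S^1}} \frac{\sigma_1(w_p)^{j_1-2}\,\prod_{k=2}^n \sigma_k(w_p)^{j_k}}{\prod_{i=1}^n w_{p,i}}.
\end{equation*}

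It then remains to show that the surviving sum vanishes. I would recognize it, after reinstating the appropriate powers of $t$, as exactly the localization expression for $\int_M c_1^{j_1-2}c_2^{j_2}\cdots c_n^{j_n}$, which is a legitimate Chern monomial since $j_1-2\geq 0$. This monomial has total degree $(j_1-2)+2j_2+\cdots+nj_n = n-2$, so as an equivariant class it lies in $H^{2(n-2)}_{S^1}(M)$, and its push-forward lands in $H^{2(n-2)-2n}(\mathbb{CP}^\infty)=H^{-4}(\mathbb{CP}^\infty)=0$. Thus $\int_M c_1^{j_1-2}c_2^{j_2}\cdots c_n^{j_n}=0$, and since the powers of $t$ again factor out uniformly, the localization identity read in the fraction field $\mathbb{Q}(t)$ forces the remaining weighted sum to be zero. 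Therefore the whole expression equals $s^2\cdot 0 = 0$, as claimed (the case $s=0$ being subsumed).

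The one genuinely load-bearing point, and the reason the hypothesis $j_1\geq 2$ cannot be relaxed, is that only the \emph{even} power $\sigma_1(w_p)^2=s^2$ is constant across the fixed points: the first power $\sigma_1(w_p)=\pm s$ changes sign from pair to pair, so with $j_1=1$ no constant can be extracted and the degree-reduction argument breaks down. The only other care required is the routine justification that a push-forward into a negative cohomological degree of $\mathbb{CP}^\infty$ vanishes, and that the resulting localization identity, interpreted in $\mathbb{Q}(t)$, then annihilates the numerical weighted sum.
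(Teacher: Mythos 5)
Your proposal is correct and follows essentially the same route as the paper: you factor the constant $\sigma_1(w_p)^2 = s^2$ (the paper's $a^2t^2 = c_1^2|_{q_i}$) out of the ABBV localization sum using Lemma~\ref{divide}, and identify the remaining sum with the localization expression for $\int_M c_1^{j_1-2}c_2^{j_2}\cdots c_n^{j_n}$, which vanishes because the push-forward of a degree-$2(n-2)$ class lands in a negative-degree cohomology group of $\mathbb{CP}^\infty$. Your closing remark on why $j_1 \geq 2$ is essential is a nice addition, but the core argument coincides with the paper's proof.
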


\begin{proof}
By Lemma \ref{divide}, we can divide the fixed points into 2 pairs $(q_1,q_2)$ and $(q_3,q_4)$ such that
\begin{center}
$\displaystyle  \sum_{i=1}^n w_{q_1,i}=\sum_{i=1}^n w_{q_2,i}=-\sum_{i=1}^n w_{q_3,i}=-\sum_{i=1}^n w_{q_4,i}$.
\end{center}
Let $a=\sum_{i=1}^n w_{q_1,i}$. Then $c_1|_{q_i}=(w_{q_i,1}+w_{q_i,2}+\cdots+w_{q_i,n})t=at$ for $i \in \{1,2\}$ and $c_1|_{q_i}=(w_{q_i,1}+w_{q_i,2}+\cdots+w_{q_i,n})t=-at$ for $i \in \{3,4\}$. In particular, $c_1^2|_{q_i}=a^2 t^2$ for all $i \in \{1,2,3,4\}$.

Next, by a dimensional reason that $\int_M$ is a map from $H_{S^1}^i (M;\mathbb{Z})$ to $H^{i - \dim M} (\mathbb{CP}^\infty ; \mathbb{Z})$, we get
\begin{center}
$\displaystyle \int_M c_1^{j_1-2} c_2^{j_2} \cdots c_n^{j_n}=0$.
\end{center}
On the other hand, taking $\alpha=c_1^{j_1-2} c_2^{j_2} \cdots c_n^{j_n}$ in Theorem \ref{t21},
\begin{center}
$\displaystyle \int_M c_1^{j_1-2} c_2^{j_2} \cdots c_n^{j_n}=\sum_{i=1}^4 \frac{(c_1^{j_1-2} c_2^{j_2} \cdots c_n^{j_n})|_{q_i}}{e_{S^1}(N_{q_i})}$.
\end{center}
Finally, taking $\alpha=c_1^{j_1} c_2^{j_2} \cdots c_n^{j_n}$ in Theorem \ref{t21},
\begin{center}
$\displaystyle \int_M c_1^{j_1} c_2^{j_2} \cdots c_n^{j_n}=\sum_{i=1}^4 \frac{(c_1^{j_1} c_2^{j_2} \cdots c_n^{j_n})|_{q_i}}{e_{S^1}(N_{q_i})}=\sum_{i=1}^4 c_1^2|_{q_i} \frac{(c_1^{j_1-2} c_2^{j_2} \cdots c_n^{j_n})|_{q_i}}{e_{S^1}(N_{q_i})}=\sum_{i=1}^4 a^2 t^2 \frac{(c_1^{j_1-2} c_2^{j_2} \cdots c_n^{j_n})|_{q_i}}{e_{S^1}(N_{q_i})}=a^2 t^2 \sum_{i=1}^4  \frac{(c_1^{j_1-2} c_2^{j_2} \cdots c_n^{j_n})|_{q_i}}{e_{S^1}(N_{q_i})}=a^2 t^2 \int_M c_1^{j_1-2} c_2^{j_2} \cdots c_n^{j_n}=0$.
\end{center}
Thus, this lemma holds.
\end{proof}

In particular, we obtain the following.

\begin{lemma} \label{vanish}
Let the circle act on a $10$-dimensional compact almost complex manifold $M$ with 4 fixed points. Then
\begin{center}
$\displaystyle \int_M c_1^3 c_2=0$ and $\displaystyle \int_M c_1^2 c_3=0$.
\end{center}
\end{lemma}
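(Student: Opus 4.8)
The plan is to obtain this as the immediate specialization of Lemma \ref{pre-vanish} to the case $n=5$. Since $\dim M = 10 = 2n$, we have $n = 5 \geq 4$, so the hypotheses of Lemma \ref{pre-vanish} are met verbatim. First I would identify the two Chern numbers in question with monomials $c_1^{j_1} c_2^{j_2} \cdots c_5^{j_5}$ obeying the weighted-degree constraint $j_1 + 2j_2 + \cdots + 5j_5 = 5$. For $c_1^3 c_2$ this is the exponent vector $(j_1,\dots,j_5) = (3,1,0,0,0)$, for which $3 + 2 = 5$; for $c_1^2 c_3$ it is $(2,0,1,0,0)$, for which $2 + 3 = 5$. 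In both cases the weighted degree is $5$ as required, and crucially $j_1 \geq 2$.

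With this bookkeeping verified, Lemma \ref{pre-vanish} applies directly and yields $\int_M c_1^3 c_2 = 0$ and $\int_M c_1^2 c_3 = 0$. There is no genuine obstacle at this stage: all of the real work has already been carried out in the proof of Lemma \ref{pre-vanish}, namely dividing the four fixed points into the two balanced pairs $(q_1,q_2)$ and $(q_3,q_4)$ furnished by Lemma \ref{divide}, observing that $c_1^2|_{q_i} = a^2 t^2$ takes the same scalar value at every fixed point, factoring that common scalar out of the ABBV localization sum, and then using the dimensional vanishing of $\int_M c_1^{j_1-2} c_2^{j_2} \cdots c_n^{j_n}$. The only point demanding any care is confirming that each requested monomial truly carries at least two factors of $c_1$ — precisely the condition $j_1 \geq 2$ that drives the argument — which the exponent computation above settles.
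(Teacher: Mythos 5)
Your proof is correct and takes exactly the paper's route: the paper's own proof of this lemma is simply the one-line observation that it follows from Lemma \ref{pre-vanish}, and your exponent-vector bookkeeping ($(3,1,0,0,0)$ and $(2,0,1,0,0)$, each with weighted degree $5$ and $j_1 \geq 2$) just makes that specialization explicit.
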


\begin{proof}
This follows from Lemma \ref{pre-vanish}.
\end{proof}

\section{Proof of Theorem \ref{main}}

In this section, we prove Theorems~\ref{bound} and Theorem~\ref{main}. First, we prove Theorem~\ref{main}.

\begin{proof}[\textbf{Proof of Theorem \ref{main}}]
Assume on the contrary that there exists a circle action on a 10-dimensional compact almost complex manifold $M$ with exactly 4 fixed points. The Todd genus $\mathrm{Todd}(M)$ of $M$ is
\begin{center}
$\displaystyle \mathrm{Todd}(M)=\int_M \frac{1}{1440} (-c_1c_4+c_1^2c_3+3c_1c_2^2-c_1^3c_2)$,
\end{center}
see \cite[p. 14]{Hi}.
By Lemma \ref{vanish}, $\int_M c_1^2 c_3=0$ and $\int_M c_1^3 c_2=0$. Thus,
\begin{center}
$\displaystyle \mathrm{Todd}(M)=\int_M \frac{1}{1440} (-c_1c_4+3c_1c_2^2).$
\end{center}
Moreover, since the Todd genus $\mathrm{Todd}(M)$ of $M$ is the Hirzebruch $\chi_y$-genus $\chi_y(M)$ of $M$ evaluated at $y=0$, by Theorem \ref{t22}, $\mathrm{Todd}(M)=N_0$.
Therefore, the following equation holds.
\begin{center}
$\displaystyle \int_M 3c_1 c_2^2=1440 \cdot N_0 + \int_M c_1c_4$.
\end{center}
Since any Chern number of a compact almost complex manifold is an integer,
$$\int_M 3c_1 c_2^2 \equiv 0 \pmod 3.$$
By Theorem~\ref{gsformula}, $\int_M c_1c_4 = \sum_{i=0}^5 N_i [ 6i(i-1)-25]$ and hence
\begin{center}
$\D \int_M 3c_1 c_2^2=1440 \cdot N_0 + \int_M c_1c_4 = 1440 \cdot N_0 + \sum_{i=0}^5 N_i \left[ 6i(i-1)-25\right] \equiv - \sum_{i=0}^5 N_i =-4 \pmod 3,$
\end{center}
which is a contradiction. Therefore, this theorem holds.
\end{proof}

\begin{proof}[Proof of Theorem~\ref{bound}]
This theorem holds if there is a fixed component of positive dimension. Thus, from now on, suppose that the fixed point set is discrete. By Lemma \ref{even}, the number of fixed points is even. By Theorem \ref{few}, the action cannot have exactly two fixed points. By Theorem \ref{main}, the action cannot have exactly four fixed points. Therefore, this theorem holds.
\end{proof}

\end{document}